\documentclass[11pt]{article}

\usepackage[T1]{fontenc}
\usepackage[utf8]{inputenc}
\usepackage[english]{babel}
\usepackage{lmodern}
\usepackage{microtype}
\usepackage{geometry}
\usepackage{amsmath,amssymb,amsthm,mathtools}
\usepackage{enumitem}
\usepackage[colorlinks=true,linkcolor=blue,citecolor=blue,urlcolor=blue]{hyperref}
\hypersetup{pdftitle={Arithmetic intersections},pdfauthor={CombGeoLab}}

\newtheorem{theorem}{Theorem}[section]
\newtheorem{lemma}[theorem]{Lemma}
\newtheorem{proposition}[theorem]{Proposition}
\newtheorem{claim}{Claim}
\newtheorem{corollary}[theorem]{Corollary}
\theoremstyle{definition}

\newtheorem{conjecture}[theorem]{Conjecture}

\newcommand{\A}{\mathcal A}
\newcommand{\B}{\mathcal B}
\newcommand{\C}{\mathcal C}
\newcommand{\Ff}{\mathcal F}
\newcommand{\Pk}{\mathcal P}
\newcommand{\E}{\mathbb E}

\title{Families that intersect in an arithmetic progression}
\author{Andrey Kupavskii, Fedor Noskov, Georgy Sokolov, Nikolai Terekhov}
\date{}

\begin{document}
\maketitle

\begin{abstract}
For $n\in\mathbb N$, let $\mu_n$ be the uniform measure on $2^{[n]}$.
A family $\Ff\subseteq2^{[n]}$ is called $k$-AP-intersecting if
$F_1\cap F_2$ contains a nonconstant $k$-term arithmetic progression for every
$F_1, F_2\in\Ff$.  Writing $m_k$ for the supremum of
$\mu_n(\Ff)$ over all $n$ and all such families $\Ff$, we prove
\[
2^{-k}\le m_k\le32\cdot 2^{-k}\qquad(k\ge3).
\]
 This greatly improves a recent result of Gillott, who proved that $m_k \leq k^{-1/3}$ for all sufficiently large $k$, and answers his question on $m_k$'s exponential decay. We also prove a similar bound for the corresponding cross-intersection problem.
\end{abstract}

\section{Introduction}

For $n\in\mathbb N$, write $[n]=\{1,\dots,n\}$ and identify a subset of $[n]$
with its indicator vector in $\{0,1\}^n$.  Let $\mu_n$ be the uniform
probability measure on $2^{[n]}$.

Forbidden intersection problems are a classical topic in extremal combinatorics;
the Erd\H{o}s--Ko--Rado theorem~\cite{EKR} is a central example. In 1964, Katona~\cite{Katona1964}, answering one of the questions from~\cite{EKR}, proved a non-uniform analogue of the Erd\H{o}s--Ko--Rado theorem for $k$-intersecting families.

\begin{theorem} \label{t.Katona}
	Let $1\le k\le n$ and let $\mathcal{F}\subseteq2^{[n]}$ satisfy $|F_1 \cap F_2| \geq k$ for all $F_1, F_2 \in \mathcal{F}$. Then
	\begin{itemize}
		\item $|\mathcal{F}| \leq \sum_{i=s}^n {n \choose i}$ if $n + k = 2s$;
		\item $|\mathcal{F}| \leq {n - 1 \choose s} + \sum_{i=s + 1}^n {n \choose i}$ if $n + k = 2s + 1$.
	\end{itemize}
\end{theorem}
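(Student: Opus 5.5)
We prove Katona's theorem by compression followed by a counting argument on the two halves of the family. By the standard $(i,j)$-shifting operators $S_{ij}$ (replace $F$ by $F\setminus\{j\}\cup\{i\}$ when $j\in F$, $i\notin F$, and the result is not already in $\Ff$), we may assume $\Ff$ is shifted. Shifting preserves $|\Ff|$ and the property $|F_1\cap F_2|\ge k$; this is the usual case check on whether $i,j$ lie in $F_1,F_2$. We may also assume $\Ff$ is an up-set, since adding supersets preserves $k$-intersection and can only increase $|\Ff|$.

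The key step is to show that every $F\in\Ff$ satisfies $|F|\ge s$ in the case $n+k=2s$. Suppose $F\in\Ff$ has $|F|=t\le s-1$. For any $G\in\Ff$ we have $|F\cap G|\ge k$, which forces $|G|\ge k$ and $|F|+|G|\ge n+k$ whenever $F\cup G=[n]$. The cleanest route uses the complement-type argument of Katona. Consider the pairs $\{A, B\}$ with $B$ obtained from the complement of $A$ by adding a fixed-size overlap. For each $i<s$, compare the layer $\Ff_i=\{F\in\Ff:|F|=i\}$ with the layer $\Ff_{n+k-i}$ through the bipartite "shadow" relation. Define $\partial_{k'}\Ff_i$ to be the family of complements of sets in the $(n-i)$-layer that intersect some $F\in \Ff_i$ in fewer than $k$ elements. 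Then the sets $[n]\setminus F\cup X$ with $|X|<k$, $X\subset F$ are excluded from $\Ff$. This gives an injection-type inequality: $\Ff_i$ together with the excluded sets of size $n-i+k-1$ has size at least comparable to $\binom{n}{i}$.

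The main obstacle is proving this inequality. We would use Katona's intersecting shadow theorem: if $\A\subseteq\binom{[n]}{i}$ is $k$-intersecting (any two members meet in at least $k$ points), then the $(i-k)$-shadow satisfies $|\partial_{i-k}\A|\ge|\A|$. We apply it to the $k$-intersecting layer $\Ff_i$. The shadow of $\Ff_i$ on level $i-k$ consists of sets $D$ whose complements cannot lie in $\Ff$: if $[n]\setminus D\in\Ff$, then $D\subset F\in\Ff_i$ gives $|F\cap([n]\setminus D)|=k$. The problem is that this only sits exactly at threshold, so we use the $(i-k+1)$-shadow instead. Intersecting in at most $k-1$ elements violates the condition, and the corresponding shadow inequality ($|\partial_{i-k+1}\A|\ge|\A|$ holds a fortiori) shows that $\Ff_i$ and the complement family in layer $n-i+k-1$ are disjoint-image: $|\Ff_i|+|\Ff_{n-i+k-1}|\le\binom{n}{i}$.

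Summing over the pairs $(i,\,n+k-1-i)$ with $i<s$ completes the proof. When $n+k=2s$, the two indices $i$ and $n+k-1-i$ are distinct, with $i\le s-1<n+k-1-i$, so each pair contributes at most $\binom ni=\binom n{n+k-1-i}\cdot$(appropriately matched, using $\binom{n}{i}\le\binom{n}{n+k-1-i}$ after reindexing). This yields $|\Ff|\le\sum_{i\ge s}\binom ni$. When $n+k=2s+1$, the middle layer $i=s$ pairs with itself. There we restrict to $[n-1]$: the sets of $\Ff_s$ form a $k$-intersecting family which, by the same shadow argument applied to the link on $[n-1]$ (or by Erd\H{o}s--Ko--Rado--type counting of $F$ versus $[n]\setminus F$ plus one element), has size at most $\binom{n-1}{s}$. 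This gives the second bound.
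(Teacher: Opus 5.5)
The paper only cites this theorem, so your argument has to stand on its own. Its core is the classical Katona proof: pair layer $i$ with layer $n+k-1-i$, use that $\partial_{i-k+1}\Ff_i$ and $\{[n]\setminus G: G\in\Ff_{n+k-1-i}\}$ are disjoint, and invoke the intersecting shadow theorem. Several steps are wrong as written, though.

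The announced ``key step'' is false. The family $\{F:[k]\subseteq F\}$ is a shifted $k$-intersecting up-set containing $[k]$, and $k<s$ whenever $k<n$. You never use this claim (or the shifting), so delete it.

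The pair bound is misstated. The two disjoint families live in $\binom{[n]}{i-k+1}$, so the correct bound is $|\Ff_i|+|\Ff_{n+k-1-i}|\le\binom{n}{i-k+1}=\binom{n}{n+k-1-i}$, not $\binom ni$. The inequality $\binom ni\le\binom n{n+k-1-i}$ you use to rescue the sum is false for every $k\le i\le s-1$ once $k\ge2$ (e.g.\ $n=10$, $k=2$, $i=5$: $252>210$). With the correct right-hand side the even case closes. The pairs with $k\le i\le s-1$ cover layers $s,\dots,n-1$, layer $n$ contributes at most $1$, and layers below $k$ are empty.

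The odd case $n+k=2s+1$ has a genuine gap. The pairs $(i,2s-i)$ with $k\le i\le s-1$ cover layers $s+1,\dots,n-1$, so everything rests on $|\Ff_s|\le\binom{n-1}{s}$. Self-pairing layer $s$ does not give this. The bound $|\partial_{n-s}\Ff_s|\ge|\Ff_s|$ (note $n-s=s-k+1$), together with disjointness of $\partial_{n-s}\Ff_s$ from $\{[n]\setminus F:F\in\Ff_s\}$ inside $\binom{[n]}{n-s}$, yields only $|\Ff_s|\le\frac12\binom ns$. This exceeds $\binom{n-1}{s}=\frac{n-s}{n}\binom ns$ whenever $k\ge2$ (for $n=5$, $k=2$, $s=3$: $5$ versus $4$). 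The phrases ``the same shadow argument applied to the link on $[n-1]$'' and ``EKR-type counting'' do not supply the missing step.

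What is needed is the following observation. For $s$-sets with $n=2s+1-k$, one has $|F\cap G|\ge k$ iff $|F\cup G|\le n-1$ iff $([n]\setminus F)\cap([n]\setminus G)\neq\varnothing$. So the complements of $\Ff_s$ form an intersecting family of $(n-s)$-sets with $2(n-s)=n-k+1\le n$, and Erd\H{o}s--Ko--Rado gives $|\Ff_s|\le\binom{n-1}{n-s-1}=\binom{n-1}{s}$. Alternatively, the sharp form of Katona's shadow theorem, $|\partial_{n-s}\Ff_s|\ge\frac{s}{n-s}|\Ff_s|$, makes your self-pairing give exactly $\binom{n-1}{s}$.
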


Katona's bound is attained by the ball
$\mathcal{F} = \{F\subseteq[n]: |F|\ge s\}$ when $n+k=2s$, and by a similar family in the other case. Note that if $k$ is fixed and $\mathcal{F}_n$ is an extremal $k$-intersecting family, we have
$$\lim_{n \rightarrow \infty} \mu_{n}(\mathcal{F}_n) = \frac{1}{2}.$$

In 1976, Simonovits and S\'os suggested several structured variants of the intersection problem for non-uniform families, in which trivial families are expected to be extremal. One of these variants studies families intersecting in a $k$-term arithmetic progression. A \emph{$k$-term arithmetic progression}, or $k$-AP, is a set $Q(a,d)=\{a,a+d,\dots,a+(k-1)d\}$,
where $a,d\in\mathbb N$, $d\ge1$, and $a+(k-1)d\le n$. Let $\Pk_k(n)$ be the collection of all such progressions. A family $\Ff\subseteq2^{[n]}$ is \emph{$k$-AP-intersecting} if
$F_1\cap F_2$ contains a member of $\Pk_k(n)$ for every $F_1, F_2\in\Ff$.
Set
\[
m_{k,n}:=\max\bigl\{\mu_n(\Ff):\Ff\text{ is $k$-AP-intersecting}\bigr\}.
\]
For fixed $k$, $m_{k,n}$ is nondecreasing in $n$: a family on $[n]$ extends to $[n+1]$ by allowing the new element freely. Since $m_{k,n}\le1$, the limit exists:
\[
m_k:=\lim_{n \rightarrow \infty}m_{k,n}.
\]

Simonovits and S\'os conjectured that the largest $3$-AP-intersecting family is trivial, that is, consists of all sets that contain a fixed $3$-term arithmetic progression. Their conjecture was recorded by Chung, Graham, Frankl and
Shearer~\cite[Conjecture~4]{CGFS1986}.

\begin{conjecture}[Simonovits--S\'os]\label{conj:three}
	For every $n\ge3$, a $3$-AP-intersecting family of subsets of $[n]$ has
	at most $2^{n-3}$ members. Equivalently, $m_3=1/8$.
\end{conjecture}

A natural generalization of this conjecture is $m_k = 2^{-k}$ for all $k \geq 3$.
Until recently, only the trivial upper bound $m_k\le1/2$ was known. Gillott~\cite[Theorems~3.3 and~2.1]{Gillott2026} introduced an adaptive concentration inequality extending Talagrand's work and obtained
\[
m_4\le\frac12-\varepsilon
\quad\text{for some absolute $\varepsilon>0$},
\qquad
m_k\le k^{-1/3}\quad\text{for sufficiently large $k$}.
\]
He asked whether $m_k$
decays exponentially in $k$~\cite[Question~4.1]{Gillott2026}.

More recently, Keevash~\cite{Keevash2026} proved that $m_3\le\frac12-c$
for an absolute constant $c>0$.

In this paper, we show that the trivial construction for $k$-AP-intersecting families is optimal up to an absolute multiplicative constant. Actually, we prove a bound for cross $k$-AP-intersecting families.  Families $\A,\B\subseteq2^{[n]}$ are \emph{cross $k$-AP-intersecting} if
$A\cap B$ contains a member of $\Pk_k(n)$ for every $A\in\A$ and $B\in\B$.
Set
\[
\widetilde m_{k,n}
:=\max\Bigl\{\mu_n(\A)\mu_n(\B):
\A,\B\subseteq2^{[n]}\text{ are cross $k$-AP-intersecting}\Bigr\},\ \ \ \ \ \
\widetilde m_k:=\sup_{n\ge1}\widetilde m_{k,n}.
\]

Our main result is the following theorem.
\begin{theorem}\label{thm:main}
	For every $k\ge4$ and every $n\ge1$,
	\[
	\widetilde m_{k,n}\le1024 \cdot 4^{-k}.
	\]
\end{theorem}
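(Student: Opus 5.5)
We aim to show $\mu(\A)\mu(\B)\le 1024\cdot 4^{-k}$ by exploiting that a $k$-AP is very costly for a random set, combined with a correlation/pairing argument that turns cross-intersection into a small-probability event. The basic observation is that if $\A,\B$ are cross $k$-AP-intersecting, then for any $A\in\A$ and $B\in\B$ the set $A\cap B$ contains a $k$-AP. Hence, for independent uniform $A,B$, the event $\{A\in\A, B\in\B\}$ forces $A\cap B\supseteq Q$ for some $Q\in\Pk_k(n)$. However, $A\cap B$ is a $1/4$-random set, so a union bound over $\Pk_k(n)$ gives roughly $n^2 4^{-k}$, which is useless. We therefore need to restrict to a bounded number of candidate progressions.

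\textbf{Step 1 (localization via a coupling).} Fix a partition of $[n]$ into blocks, or more naturally use the structure of APs: condition on a random ``window'' and use the symmetry trick in which we simultaneously compare $A$ with many resamplings. Concretely, pick $B\in\B$ and consider the complement-type operation. For $A\in\A$ and $B\in\B$, the set $A\cap B$ must contain a $k$-AP, while $A\cap \overline{B}$ and similar pieces can be arbitrary. The plan is an averaging argument over the $4^n$ pairs (indicator pairs $(a_i,b_i)\in\{0,1\}^2$). We define a map that, for a pair $(A,B)$, takes the lexicographically first $k$-AP $Q\subseteq A\cap B$. We then show that the fibres are controlled: for a fixed $Q$, the pairs with $Q$ first have measure at most $4^{-k}$ times the probability that no earlier AP is contained in $A\cap B$. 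Summing still costs the number of $Q$'s, so we instead use a shifting/compression step.

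\textbf{Step 2 (reduction to few progressions).} We will show that one may assume the families are \emph{shifted} in a suitable sense. In particular, we want compressions that preserve the cross $k$-AP-intersecting property, such as reflections $i\mapsto n+1-i$ or coordinatewise $ij$-shifts restricted to AP-preserving maps; arguably the cleanest is to average over affine images $x\mapsto a+dx$. The goal is a bound of the form
\[
\mu(\A)\mu(\B)\le \Pr\bigl[\exists Q\in\Pk_k(n)\text{ with } Q\subseteq A\cap B \text{ and } Q\text{ among } O(1)\text{ ``canonical'' APs}\bigr].
\]
The idea is to use a density-increment/dichotomy: if $\A$ is large, then by a Kleitman/Harris-type correlation argument, $\B$ must almost all meet $A$ in APs lying within a bounded set of patterns. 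The constant $1024=4^5$ suggests losing five coordinates, i.e.\ showing that every cross-intersection is witnessed by a $(k-5)$-AP drawn from one fixed progression class. Concretely, we aim to prove that there exist a fixed AP $Q_0$ of length $k-5$ and families $\A'\supseteq\A$, $\B'\supseteq\B$ of the form ``contain $Q_0$'' up to a factor, which gives $4^{-(k-5)}$.

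\textbf{Main obstacle.} The hard step is Step 2: ruling out that the intersections are spread over many different APs, whose number grows with $n$. We would first try a \emph{random restriction} to a random AP-substructure. Choose a random common difference $d$ and residue, and restrict both families to the subprogression $\{r, r+d, r+2d,\dots\}$, which is order-isomorphic to $[m]$ and preserves APs. Combined with a Fourier or hypercontractive estimate on $f=1_\A$ and $g=1_\B$, this should bound $\E[f(A)g(B)]$ by the $4^{-k}$-scale contribution of a single progression, up to a constant factor.
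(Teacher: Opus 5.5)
Your proposal has a genuine gap. You correctly identify the obstacle: for independent uniform $A,B$ one has $\mathbb P(A\cap B\text{ contains a }k\text{-AP})\to1$, so a plain union bound over $\Pk_k(n)$ is $n$-dependent. But Step~2, which is supposed to overcome it, is a list of hoped-for tools rather than an argument, and the concrete tools fail as stated.
\begin{itemize}
\item The $ij$-shifts do not preserve the cross $k$-AP-intersecting property in general: moving an element off a progression can destroy the only witness in $A\cap B$.
\item Affine maps $x\mapsto a+dx$ do not map $[n]$ to itself.
\item Restricting to a subprogression $S=\{r,r+d,\dots\}$ by fixing the coordinates outside $S$ does not give cross $k$-AP-intersecting families on $S$, because the progression witnessing $A\cap B$ may use elements outside $S$.
\item The claim that, up to a constant factor, all intersections are witnessed by one fixed $(k-5)$-AP $Q_0$ is a stability-type statement that would by itself imply the theorem, and you give no mechanism for it.
\item You name no specific Fourier or hypercontractive estimate that would yield a factor $4^{-k}$ independent of $n$.
\end{itemize}
Also, the constant $1024\cdot4^{-k}$ in the paper arises as $4\cdot4^{-(k-4)}$, not from losing five coordinates.

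The missing idea is to replace uniform randomness by a sparse random set tailored to the families, so that the union bound becomes summable. The paper passes to the up-set $\A$ and the down-set $\C=\{[n]\setminus B: B\in\B\}$, so that $A\setminus C$ contains a $k$-AP for all $A\in\A$, $C\in\C$. For uniform $X$ and an auxiliary $Z\subseteq X$, set $W(X)=\mathbb P(X\setminus Z\in\C\mid X)$. Then $I(X\in\A)\,W(X)\le\mathbb P(Z\text{ contains a }k\text{-AP}\mid X)$, hence $\mu_n(\A)\mu_n(\C)\le\E[L_n\,\mathbb P(Z\text{ contains a }k\text{-AP}\mid X)]$ with $L_n=\mu_n(\C)/W(X)$.

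Coordinates are revealed one at a time. If $X_i=1$, then $i$ enters $Z$ independently with probability $p_i=2r_i/(1+3r_i)$, where $r_i$ is the relative gap between the conditional densities of $\C$ with $i$ deleted and with $i$ kept. Then $L_i=\mu_n(\C)/W_i$ is a supermartingale started at $1$, satisfying $\E(L_ip_i\mid\Ff_{i-1})\le L_{i-1}/4$ and $\E(L_ip_i^2\mid\Ff_{i-1})\le2\bigl(L_{i-1}-\E(L_i\mid\Ff_{i-1})\bigr)$. Consequently the weighted energy obeys $\sum_j\E(L_jp_j^2)\le2$.

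The first estimate strips all but four terms of each progression at cost $1/4$ each. The second estimate handles the remaining $4$-APs. Combine it with $p_{a_1}p_{a_2}p_{a_3}p_{a_4}\le\frac12(p_{a_1}^2p_{a_2}^2+p_{a_3}^2p_{a_4}^2)$ and the fact that a $4$-AP is determined by two consecutive terms. This gives $\sum_{R\in\Pk_4(n)}\E(L_{\max R}P_R)\le\sum_{i<j}\E(L_jp_i^2p_j^2)\le4$, with no dependence on $n$.

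It is this bounded weighted $\ell_2$-energy of family-adapted inclusion probabilities that controls the $\Theta(n^2/k)$ progressions, not any compression or structural reduction. Nothing in your outline plays that role.
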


Since two copies of any $k$-AP-intersecting family $\Ff$ are cross $k$-AP-intersecting, we have $m_k \leq \widetilde m_k^{1/2}$. Thus, together with the lower bound provided by a trivial $k$-AP-intersecting family, Theorem \ref{thm:main} gives
$$2^{-k}\le m_k\le32 \cdot 2^{-k}\qquad(k\ge3),$$
where the upper bound for $k=3$ is immediate.

Another well-studied class of structured intersection patterns consists of the edge sets of graphs. Let $H$ be a graph with at least one edge. A family of graphs on the same vertex set is called $H$-intersecting if the common edges of any two members contain a copy of $H$. The most well-known case of the graph-intersection problem is the triangle-intersection problem that asks about the largest $K_3$-intersecting family. In 1986 Chung, Graham, Frankl and Shearer proved that every triangle-intersecting family has density at most $\frac{1}{4}$ using the entropy method \cite{CGFS1986}. Later Ellis, Filmus and Friedgut used Boolean analysis to prove that the trivial triangle-intersecting family is extremal \cite{EFF2012}. A similar result is known for $K_4$-intersecting families~\cite{BergerZhao2023}. For bipartite graphs $H$, until recently there was no non-trivial bound on $m_H = \sup\{\mu(\mathcal{F}): \text{$\mathcal{F}$ is $H$-intersecting} \}$. Gillott proved that $m_{C_4} < \frac{1}{2}$ using the same approach as in his bound on $m_4$ \cite{Gillott2026}. More recently, Han and Wang~\cite{HanWang2026} proved that $m_{P_4} < \frac{1}{2}$, where $P_4$ is a path with $4$ vertices. Since $m_H=1/2$ for every graph $H$ containing no $P_4$ or $K_3$, this characterizes the graphs with $m_H=1/2$.

Let us comment on the proof of Theorem~\ref{thm:main}. After taking up-closures and replacing
the second family by its family of complements, we obtain a decreasing family $\C$ such that
$A\setminus C$ contains a $k$-AP for every $A\in\A$ and $C\in\C$.  Now take a
uniform random set $X$ and, conditionally on $X$, choose a random subset
$Z=Z(X)\subseteq X$ and put
\[
   W(X):=\E\bigl(I(X\setminus Z\in\C)\mid X\bigr).
\]
The product $\mu_n(\A)\mu_n(\C)$ is bounded by
\begin{equation}\label{eq:rhs_master-abstract}
	\E\left[
	\frac{\mu_n(\C)}{W(X)}
	\E\bigl(I(Z\text{ contains a $k$-AP})\mid X\bigr)
	\right].
\end{equation}
See Lemma~\ref{lem:master} below.
Bounding \eqref{eq:rhs_master-abstract} is the core of the argument. We shall define $Z$ coordinate by coordinate, so that the first factor shall become the terminal value of a positive
supermartingale starting at $1$. 

The choice of $Z$ involves a trade-off. Adding more coordinates to $Z$ increases $W(X)$ and hence lowers
the term $\frac{\mu_n(\C)}{W(X)}$, but it also makes a progression inside $Z$ more likely. We carefully choose $Z$, which allows us to reduce $k$-AP bounds to $4$-AP bounds: we get a factor $1/4$ for each of the remaining $k-4$ terms.

To construct $Z$, we use a modification of an adaptive exposure procedure introduced by Gillott in the proof of \cite[Theorem~1.1]{Gillott2026}. Conditional on $X$, the elements of $Z$ are chosen independently, but their inclusion probabilities are determined adaptively. We reveal the coordinates of $X$ one by one and choose $\mathbb P[i\in Z\mid X]$ as a function of $X_1,\ldots,X_i$. This probability depends on the relative increase in the conditional density of $\C$ produced by deleting coordinate $i$.

In Section~\ref{sec:reduction}, we reduce the problem to bounding the right-hand side of \eqref{eq:rhs_master-abstract}. In Section~\ref{sec:process}, we construct $Z$. In Section~\ref{sec:supermartingale}, we analyze the supermartingale associated with this construction and derive the estimates used in the final step. In Section~\ref{sec:final}, we complete the proof of Theorem~\ref{thm:main}. Finally, in Section~\ref{sec:comparation}, we compare our proof with Gillott's~\cite{Gillott2026}.

\section{Reduction and the random-deletion inequality}\label{sec:reduction}

We first convert cross-intersection into a one-sided difference condition. Let $\A,\B\subseteq2^{[n]}$ be cross $k$-AP-intersecting. We may assume that $\mathcal{A}$ and $\mathcal{B}$ are up-sets, since up-closure preserves the property of being cross $k$-AP-intersecting. Set
$$\mathcal{C} = \{[n] \setminus B: B \in \mathcal{B}\}.$$
Then $\mu_n(\C)=\mu_n(\B)$ and $\C$ is a down-set. For $C=[n]\setminus B$,
we have $A\setminus C=A\cap B$, so $A\setminus C$ contains a $k$-AP for every
$A\in\A$ and $C\in\C$. Thus Theorem~\ref{thm:main} follows from the next statement.

\begin{theorem}\label{thm:reduced}
Let $k\ge4$, and let $\A,\C\subseteq2^{[n]}$ satisfy:
\begin{enumerate}[label=\textup{(\roman*)}]
\item $\A$ is an up-set and $\mathcal{C}$ is a down-set;
\item $A\setminus C$ contains a $k$-AP for every $A\in\A$ and $C\in\C$.
\end{enumerate}
Then
\[
   \mu_n(\A)\mu_n(\C)\le1024\cdot 4^{-k}.
\]
\end{theorem}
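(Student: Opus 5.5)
Following the outline in the introduction, the proof has three stages. First comes the random-deletion inequality (Lemma~\ref{lem:master}). Then we construct $Z$ adaptively. Finally we estimate the resulting expression.

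\textbf{Step 1: the master inequality.} Let $X$ be uniform on $2^{[n]}$, and conditionally on $X$ let $Z\subseteq X$ be random, with $W(X)=\E(I(X\setminus Z\in\C)\mid X)$. We claim
\[
\mu_n(\A)\mu_n(\C)\le \E\Bigl[\frac{\mu_n(\C)}{W(X)}\,\E\bigl(I(Z\text{ contains a $k$-AP})\mid X\bigr)\Bigr].
\]
To see this, note that if $X\in\A$ and $X\setminus Z\in\C$, then by (ii) the set $X\setminus(X\setminus Z)=Z$ contains a $k$-AP. Hence
\[
I(X\in\A)\,W(X)\le \E\bigl(I(Z\text{ has a $k$-AP})\mid X\bigr).
\]
Dividing by $W(X)$ (which we may assume is positive, e.g.\ by forcing $\emptyset\in\C$), multiplying by $\mu_n(\C)$ and taking expectations gives the claim.

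\textbf{Step 2: adaptive construction of $Z$.} Reveal $X_1,\dots,X_n$ in order, and let $\C_i$ denote the restriction of $\C$ to sets whose first $i$ coordinates agree with those of $X\setminus Z$ so far. If $X_i=1$, put $i$ into $Z$ independently with probability $p_i$. Here $p_i$ is a function of $X_1,\dots,X_i$ and of the relative gain $\rho_i=\mu(\C_{i-1}\mid i\notin)/\mu(\C_{i-1})\in[1,2]$, which is at least $1$ because $\C$ is a down-set.

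Set $M_i=\mu_n(\C)/\E(\text{density of the current slice})$, with the slice densities multiplied along the steps. Then $M_n=\mu_n(\C)/W(X)$, and $M_i$ evolves multiplicatively, with a factor depending on $X_i$ and $\rho_i$. We choose $p_i$ so that $M$ is a positive supermartingale with $M_0=1$. The natural choice is to take $i$ into $Z$ when doing so is needed to keep $\E[M_i\mid\mathcal F_{i-1}]\le M_{i-1}$. A plausible rule is $p_i=$ (something like) $\min(1,2(\rho_i-1))$. Coordinates on which $\C$ does not depend then never enter $Z$, while deleting a coordinate with $\rho_i$ near $2$ compensates for a loss by a factor of $2$ in density.

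\textbf{Step 3: bounding the AP probability.} This is the main obstacle. We must show
\[
\E\bigl[M_n\, \mathbb P(Z\supseteq Q \text{ for some } Q\in\Pk_k(n)\mid X)\bigr]\le 1024\cdot 4^{-k}.
\]
A union bound over all $k$-APs is far too lossy. Instead we expose by the first $4$ terms. For a $k$-AP $Q$ whose first four elements $q_1<q_2<q_3<q_4$ are fixed, the remaining $k-4$ terms are determined. Each remaining term lies in $Z$ only if it lies in $X$ (probability $1/2$) and is then selected; selection costs at most another factor $1/2$ in the $M$-weighted measure, since $\E[M_i I(i\in Z)\mid \mathcal F_{i-1}]\le M_{i-1}/2$-type bounds should follow from the supermartingale design. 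Each of those $k-4$ terms therefore contributes a factor $1/4$.

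For the first four terms, we need to show that the $M$-weighted expected number of $4$-APs in $Z$ is $O(1)$, and in fact at most $1024/4^4=4$. The idea is that $\sum_i$ of the weighted selection probabilities is controlled by the total multiplicative drift of the supermartingale, which is bounded since $M\ge0$ and $M_0=1$. Roughly, $\E\sum_i M_{i}\,p_i X_i\lesssim$ a constant via an optional-stopping or $\log M$ argument, and pairs or quadruples are handled by a similar iterated argument. I expect the hardest part to be obtaining such a bound with an absolute constant independent of $n$, since the number of $4$-APs grows like $n^2$. The plan is to stop at the first time $Z$ contains a $4$-AP start structure and use the supermartingale property at that stopping time, combined with the $1/4$-per-term decay for the subsequent $k-4$ terms summed over the (at most geometric) choices.
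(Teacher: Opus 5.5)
Your Steps 1 and 2 follow the paper's framework. Step 3, which carries the entire bound, is not proved, and both of its quantitative claims fail as stated. Write $r=\rho_i-1\in[0,1]$ and $D=1-r+2rp$, so that $M_i=M_{i-1}/(1+r)$ if $X_i=0$ and $M_i=M_{i-1}/D$ if $X_i=1$.

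\textbf{The factor $\frac14$ per extra term.} It does not follow from supermartingality. One has $\E(M_ip_iX_i\mid\mathcal F_{i-1})=\frac{p}{2D}M_{i-1}$, because the $M$-weighted probability of $X_i=1$ is $\frac1{2D}$, not $\frac12$. For $r<1$ the bound $\frac{p}{2D}\le\frac14$ is equivalent to $p\le\frac12$. Your rule $p=\min(1,2r)$ breaks this: at $r=\frac12$ it gives $\frac{p}{2D}=\frac13$, so only a $3^{-k}$-type bound. Likewise $p\equiv1$ is a supermartingale whose ratio is close to $\frac12$.

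\textbf{The first-moment sum.} The claim $\E\sum_iM_ip_iX_i=O(1)$ is false. What telescopes to at most $M_0=1$ is the drift $\delta M_{i-1}$, where $\delta=1-\frac1{2(1+r)}-\frac1{2D}$. For every choice of $p$ one has $\delta\le3r\cdot\frac{p}{2D}$, so when the gaps are small the drift cannot pay for the first moment. For example, take $\C=\{C:|C|\le n/2\}$. Its gaps are typically of order $n^{-1/2}$ during the first $\varepsilon n$ steps, and $M_i\ge\mu_n(\C)$, so the first-moment sum is at least of order $\sqrt n$. The stopping-time plan at the end is too vague to get around this.

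\textbf{What is missing.} You need a quadratic (energy) estimate together with a pairing trick. Choose $p=O(r)$ with $p\le\frac12$. The paper takes $p(r)=\frac{2r}{1+3r}$, for which
\[
\delta=\frac{r^2}{(1+r)^2},\qquad \frac{p}{2D}=\frac{r}{(1+r)^2}\le\frac14,\qquad \frac{p^2}{2D}\le 2\delta.
\]
Set $p_i=0$ when $X_i=0$, as the paper does. The last inequality gives $\E(M_ip_i^2\mid\mathcal F_{i-1})\le2\bigl(M_{i-1}-\E(M_i\mid\mathcal F_{i-1})\bigr)$. This telescopes conditionally to $\E\bigl(\sum_{j>t}M_jp_j^2\mid\mathcal F_t\bigr)\le2M_t$.

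For $R=\{a_1<a_2<a_3<a_4\}$:
\begin{itemize}
\item use $p_{a_1}p_{a_2}p_{a_3}p_{a_4}\le\frac12\bigl(p_{a_1}^2p_{a_2}^2+p_{a_3}^2p_{a_4}^2\bigr)$;
\item move $M_{a_4}$ back to $M_{a_2}$ in the first term by supermartingality;
\item use that a $4$-AP is determined by its first two terms and also by its last two.
\end{itemize}
This bounds the weighted $4$-AP sum by $\sum_{i<j}\E(M_jp_i^2p_j^2)\le2\sum_i\E(M_ip_i^2)\le4$. The $n^2$ count of $4$-APs is absorbed completely.

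In particular, a plain weighted union bound over all $k$-APs is not too lossy. It is exactly what the paper uses, with the top $k-4$ terms peeled off one at a time by the $\frac14$-estimate plus supermartingality.

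\textbf{A point in Step 2.} The gap must be $\mathcal F_{i-1}$-measurable. It should be computed from the conditional probability averaged over the earlier deletion coins, not from the realized slice of $X\setminus Z$. Otherwise $M_n$ is not $\mu_n(\C)/W(X)$ in product form, and $\mathbb P(S\subseteq Z\mid X)$ is not $\prod_{i\in S}p_i$.
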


The following inequality explains the rest of the construction. We will guarantee that $W(X)>0$ in the next section.

\begin{lemma}[Random-deletion inequality]\label{lem:master}
Assume the hypotheses of Theorem~\ref{thm:reduced} and let $X$ be uniform on $2^{[n]}$.  Conditionally on
$X$, let $Z\subseteq X$ be any auxiliary random set, called the deletion set.  Define
\[
   W(X):=\E\bigl(I(X\setminus Z\in\C)\mid X\bigr).
\]
If $W(X)>0$ for every $X$, then
\begin{equation}\label{eq:master-abstract}
   \mu_n(\A)\mu_n(\C)
   \le
   \E\left[
      \frac{\mu_n(\C)}{W(X)}
      \E\bigl(I(Z\text{ contains a $k$-AP})\mid X\bigr)
   \right].
\end{equation}
\end{lemma}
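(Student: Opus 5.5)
Start from the observation that membership of $X$ in $\A$, together with the event $X\setminus Z\in\C$, forces a $k$-AP inside $Z$. Indeed, if $X\in\A$ and $C:=X\setminus Z\in\C$, hypothesis (ii) says that $X\setminus C$ contains a $k$-AP. Since $Z\subseteq X$, we have $X\setminus C = X\cap Z = Z$. Hence, pointwise in the joint randomness of $(X,Z)$,
\[
I(X\in\A)\,I(X\setminus Z\in\C)\le I(Z\text{ contains a $k$-AP}).
\]
Taking conditional expectation given $X$, and using that $I(X\in\A)$ is $X$-measurable, gives
\[
I(X\in\A)\,W(X)\le \E\bigl(I(Z\text{ contains a $k$-AP})\mid X\bigr).
\]

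Next divide by $W(X)$, which is allowed because $W(X)>0$ by assumption, and multiply by the constant $\mu_n(\C)$:
\[
\mu_n(\C)\,I(X\in\A)\le \frac{\mu_n(\C)}{W(X)}\,\E\bigl(I(Z\text{ contains a $k$-AP})\mid X\bigr).
\]
Now take expectation over the uniform $X$. The left side becomes $\mu_n(\C)\,\mathbb P(X\in\A)=\mu_n(\A)\mu_n(\C)$, and the right side is exactly the right-hand side of \eqref{eq:master-abstract}.

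There is essentially no obstacle here. The only points to check are the following.
\begin{enumerate}[label=(\alph*)]
\item The identity $X\setminus(X\setminus Z)=Z$ uses $Z\subseteq X$.
\item Conditioning is legitimate for an auxiliary random set defined conditionally on $X$; in the finite setting this just means averaging over the kernel.
\item The down-set and up-set assumptions are not even needed for this step. They will matter later, when $W$ is controlled.
\end{enumerate}
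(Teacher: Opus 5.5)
Your proposal is correct and matches the paper's proof step for step. Like the paper, you derive the pointwise implication ($X\in\A$ and $X\setminus Z\in\C$ force a $k$-AP in $Z=X\setminus(X\setminus Z)$), condition on $X$, divide by $W(X)>0$, multiply by $\mu_n(\C)$, and average over $X$; your remark that the up-set/down-set hypotheses play no role at this stage is also accurate.
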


\begin{proof}
Since $Z\subseteq X$, we have $Z=X\setminus(X\setminus Z)$. Therefore, if $X\in\A$ and $X\setminus Z\in\C$, then $Z$ contains a $k$-AP by Theorem~\ref{thm:reduced}(ii). Fixing $X$ and taking
conditional expectation over the auxiliary randomness therefore gives
\[
   I(X\in\A)W(X)
   \le
   \E\bigl(I(Z\text{ contains a $k$-AP})\mid X\bigr).
\]
Since $W(X)>0$, we may multiply by $\mu_n(\C)/W(X)$ and get
\[
   \mu_n(\C)\,I(X\in\A)
   \le
   \frac{\mu_n(\C)}{W(X)}
   \E\bigl(I(Z\text{ contains a $k$-AP})\mid X\bigr).
\]
Averaging over $X$ gives \eqref{eq:master-abstract}.
\end{proof}

\section{Adaptive construction of the deletion set}\label{sec:process}

Fix $\A,\C$ as in Theorem~\ref{thm:reduced}.
If $\mu_{n}(\mathcal{C})=0$, the theorem is trivial, so assume $\mu_{n}(\mathcal{C})>0$.

Let $X$ be uniform on $\{0,1\}^n$, and let
\[
   \Ff_i:=\sigma(X_1,\dots,X_i),\qquad 0\le i\le n,
\]
be the sigma-algebra generated by the first $i$ coordinates. We construct the deletion sets $Z^{(i)}$ and
the associated conditional probabilities $W_i$ recursively, starting with
\[
   Z^{(0)}=\varnothing,\qquad W_0=\mu_n(\C)>0.
\]
After step $n$, we set $Z := Z^{(n)}$.

Suppose that the first $i-1$ deletion rules have been defined and
$W_{i-1}>0$. Compare the two conditional probabilities
\begin{align}
   W^{(0)}_{i-1}
   &:=\E\bigl(I(X\setminus Z^{(i-1)}\in\C)
      \mid\Ff_{i-1},X_i=0\bigr),\label{eq:branch0}\\
   W^{(1)}_{i-1}
   &:=\E\bigl(I(X\setminus Z^{(i-1)}\in\C)
      \mid\Ff_{i-1},X_i=1\bigr).\notag
\end{align}
Since $X_i$ is a fair bit independent of $\Ff_{i-1}$, we have
\begin{equation*}
   W_{i-1}=\frac{W^{(0)}_{i-1}+W^{(1)}_{i-1}}2.
\end{equation*}
The earlier deletion rules do not depend on $X_i$, and $\C$ is a down-set, so
\begin{equation*}
   W^{(0)}_{i-1}\ge W^{(1)}_{i-1}.
\end{equation*}
Define the relative gap
\begin{equation*}
   r_i:=\frac{W^{(0)}_{i-1}-W^{(1)}_{i-1}}
              {W^{(0)}_{i-1}+W^{(1)}_{i-1}}\in[0,1].
\end{equation*}
Then $r_i$ is $\Ff_{i-1}$-measurable and
\begin{equation}\label{eq:branch-r}
   W^{(0)}_{i-1}=(1+r_i)W_{i-1},\qquad
   W^{(1)}_{i-1}=(1-r_i)W_{i-1}.
\end{equation}
The deletion rule is guided by how much deleting \(i\) increases the conditional probability that the remaining set belongs to \(\C\). If $X_i=0$, we cannot add $i$ to $Z$, since $Z\subseteq X$.
If $X_i=1$, keeping $i$ gives conditional probability
$W^{(1)}_{i-1}=(1-r_i)W_{i-1}$, whereas deleting it gives
$W^{(0)}_{i-1}=(1+r_i)W_{i-1}$, by \eqref{eq:branch-r}.
Thus $r_i$ measures the effect of deletion relative to $W_{i-1}$.
This relative effect is what matters for the reciprocal conditional
probability in \eqref{eq:master-abstract}. When $r_i$ is small, the two
branches are close, while deletion still makes a $k$-AP in $Z$ more likely.
We therefore let the deletion probability increase with $r_i$:
\begin{align*}
   Z^{(i)} := \begin{cases}
      Z^{(i - 1)} \cup \{i\} & \text{ with probability } p(r_i), \\
      Z^{(i - 1)} & \text{ with probability } 1- p (r_i),
   \end{cases} \qquad \text{ if } X_i = 1,
\end{align*}
where 
\begin{equation}\label{eq:p-function}
   p(r):=\frac{2r}{1+3r},\qquad 0\le r\le1.
\end{equation}
For brevity, we denote the probability of adding $\{i\}$ to $Z^{(i - 1)}$ by $p_i$:
\begin{align*}
   p_i = \begin{cases}
      0, & \text{ if } X_i = 0, \\
      p(r_i), & \text{ otherwise.}
   \end{cases}
\end{align*}

Let $W_i$ denote the conditional probability that $X\setminus Z^{(i)}$ belongs to
$\C$:
\begin{equation*}
   W_i:=\E\bigl(I(X\setminus Z^{(i)}\in\C)\mid\Ff_i\bigr).
\end{equation*}
This expectation averages over the auxiliary choices of $Z$ and the unexposed
coordinates $X_{i+1},\dots,X_n$. The next lemma verifies $W_i>0$,
completing the inductive step.
\begin{lemma}\label{lem:W-recursion}
For every $1\le i\le n$,
\begin{equation}\label{eq:W-recursion}
W_i=
\begin{cases}
   (1+r_i)W_{i-1},&X_i=0,\\[1mm]
   D(r_i)W_{i-1},&X_i=1
\end{cases},
\qquad
D(r):=1-r+2rp(r)=\frac{(1+r)^2}{1+3r}.
\end{equation}
In particular, $W_i>0$ for every $0\le i\le n$.
\end{lemma}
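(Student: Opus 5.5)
The proof goes by conditioning on the value of $X_i$ and on the auxiliary coin at step $i$, then using the tower property. The key structural fact is that the deletion rules at steps $j>i$ do not affect the event we condition on. More precisely, I would first check the identity
\begin{equation*}
   W_i=\E\bigl(I(X\setminus Z^{(i)}\in\C)\mid\Ff_i\bigr),
\end{equation*}
where only $Z^{(i)}$ (not the final $Z$) appears. Given $\Ff_i$, this expression depends on the auxiliary choices made at steps $1,\dots,i$ and on the unexposed coordinates $X_{i+1},\dots,X_n$. Since $Z^{(i)}\subseteq[i]$, the set $X\setminus Z^{(i)}$ agrees with $X$ on coordinates greater than $i$. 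Also, $Z^{(i-1)}$ is determined by $\Ff_{i-1}$ together with auxiliary randomness independent of $X_i,\dots,X_n$. Hence $W^{(0)}_{i-1}$ and $W^{(1)}_{i-1}$ in \eqref{eq:branch0} are conditional probabilities of events about $X\setminus Z^{(i-1)}$ with coordinate $i$ fixed to $0$ or $1$.

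\emph{Case $X_i=0$.} Then $i\notin Z^{(i)}$, so $Z^{(i)}=Z^{(i-1)}$. Conditioning on $\Ff_i$ is the same as conditioning on $\Ff_{i-1}$ and $X_i=0$, which gives $W_i=W^{(0)}_{i-1}=(1+r_i)W_{i-1}$ by \eqref{eq:branch-r}.

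\emph{Case $X_i=1$.} With probability $1-p(r_i)$ we keep $i$, so coordinate $i$ of $X\setminus Z^{(i)}$ equals $1$. The conditional probability in this branch is $W^{(1)}_{i-1}=(1-r_i)W_{i-1}$. With probability $p(r_i)$ we delete $i$, so $X\setminus Z^{(i)}$ has coordinate $i$ equal to $0$ and agrees with $X\setminus Z^{(i-1)}$ elsewhere. The distribution of the other coordinates, given $\Ff_{i-1}$, does not depend on $X_i$, so this conditional probability equals $W^{(0)}_{i-1}=(1+r_i)W_{i-1}$.

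The step needing care is exactly this identification. It holds because the coin at step $i$ is drawn independently given $\Ff_i$. Moreover, the earlier rules $Z^{(j)}$, $j<i$, use only $X_1,\dots,X_j$ with $j\le i-1$ and earlier coins, so changing the value at coordinate $i$ from $1$ to $0$ does not change the joint law of the rest.

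Averaging the two branches gives
\[
W_i=\bigl[(1-p)(1-r)+p(1+r)\bigr]W_{i-1}=\bigl(1-r+2rp(r)\bigr)W_{i-1}.
\]
Substituting $p(r)=2r/(1+3r)$ yields
\[
1-r+\frac{4r^2}{1+3r}=\frac{(1-r)(1+3r)+4r^2}{1+3r}=\frac{1+2r+r^2}{1+3r}=\frac{(1+r)^2}{1+3r},
\]
which is the formula for $D(r)$.

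Positivity follows by induction on $i$. The base case is $W_0=\mu_n(\C)>0$. For the inductive step, both multipliers $1+r_i\ge1$ and $D(r_i)=(1+r_i)^2/(1+3r_i)>0$ for $r_i\in[0,1]$, so $W_{i-1}>0$ implies $W_i>0$. This also justifies that $r_i$ is well defined at each step, since its denominator is $2W_{i-1}>0$.
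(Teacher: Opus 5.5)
Your proof is correct and follows the same route as the paper's. You split on $X_i$, identify the deletion branch with $W^{(0)}_{i-1}$ and the keep branch with $W^{(1)}_{i-1}$, average to get $1-r+2rp(r)$, simplify to $(1+r)^2/(1+3r)$, and induct for positivity; the only difference is that you spell out why deleting $i$ when $X_i=1$ gives the same conditional law as the $X_i=0$ branch, which the paper leaves implicit.
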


\begin{proof}
If $X_i=0$, coordinate $i$ is absent and cannot be deleted.  Thus, by
\eqref{eq:branch0},
\[
   W_i=W^{(0)}_{i-1}=(1+r_i)W_{i-1}.
\]
If $X_i=1$, we have
\begin{align*}
   W_i
   &=p(r_i)W^{(0)}_{i-1}+(1-p(r_i))W^{(1)}_{i-1}\\
   &=\bigl(1-r_i+2r_ip(r_i)\bigr)W_{i-1}.
\end{align*}
Substituting \eqref{eq:p-function} gives
$D(r)=(1+r)^2/(1+3r)$.  Since $W_0=\mu_{n}(\mathcal{C})>0$ and both factors in
\eqref{eq:W-recursion} are positive on $[0,1]$, positivity follows by induction.
\end{proof}

To analyze the recursion, we introduce the $\Ff_i$-measurable inverse weight
\begin{equation*}
   L_i:=\frac{\mu_{n}(\mathcal{C})}{W_i},\qquad 0\le i\le n.
\end{equation*}
In particular, as $W_0=\mu_n(\C)$ and $W_n=\E\bigl(I(X\setminus Z\in\C)\mid X\bigr)=W(X)$, it satisfies
\begin{equation}\label{eq:L-endpoints}
   L_0=1,
   \qquad
   L_n=\frac{\mu_{n}(\mathcal{C})}{W_n}=\frac{\mu_{n}(\mathcal{C})}{W(X)},
\end{equation}
and \eqref{eq:W-recursion} is equivalent to
\begin{equation}\label{eq:L-recursion}
L_i=
\begin{cases}
   \frac{1}{1+r_i}L_{i-1},&X_i=0,\\[1mm]
   \frac{1}{D(r_i)}L_{i-1},&X_i=1
\end{cases}.
\end{equation}
In the next section, we show that $(L_i)_{i=0}^n$ is a supermartingale
and establish the one-step estimates needed for the proof.

\section{The inverse weight supermartingale}\label{sec:supermartingale}

Recall that our goal is to prove
\begin{align*}
	\E\left[
	L_n(X)
	\E\bigl(I(Z\text{ contains a $k$-AP})\mid X\bigr)
	\right] \le1024\cdot 4^{-k}.
\end{align*}

We now establish the estimates on $(L_i)$ needed for this bound.
The choice $p(r)=2r/(1+3r)$ is used through three elementary identities.

\begin{lemma}[Local identities]\label{lem:local}
For $0\le r\le1$ define
\[
   \delta(r):=1-\frac{1}{2(1+r)}-\frac{1}{2D(r)}.
\]
Then
\begin{align}
   \delta(r)&=\frac{r^2}{(1+r)^2} \geq 0,\notag\\
   \frac{p(r)}{2D(r)}&=\frac{r}{(1+r)^2}\le\frac14,\label{eq:marked-cost}\\
   \frac{p(r)^2}{2D(r)}
      &=\frac{2r^2}{(1+r)^2(1+3r)}
        \le2\delta(r).\label{eq:energy-cost}
\end{align}
\end{lemma}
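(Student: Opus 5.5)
This is a direct algebraic verification. The plan is to substitute the closed forms $p(r)=2r/(1+3r)$ and $D(r)=(1+r)^2/(1+3r)$ from Lemma~\ref{lem:W-recursion}, simplify each expression, and then check each inequality on $[0,1]$. It is convenient first to record $1/D(r)=(1+3r)/(1+r)^2$.

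For the first identity, write
\[
\frac{1}{2(1+r)}+\frac{1}{2D(r)}=\frac{(1+r)+(1+3r)}{2(1+r)^2}=\frac{1+2r}{(1+r)^2},
\]
so that
\[
\delta(r)=\frac{(1+r)^2-(1+2r)}{(1+r)^2}=\frac{r^2}{(1+r)^2}\ge0 .
\]

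For \eqref{eq:marked-cost}, the factor $1+3r$ cancels:
\[
\frac{p(r)}{2D(r)}=\frac{2r}{1+3r}\cdot\frac{1+3r}{2(1+r)^2}=\frac{r}{(1+r)^2}.
\]
The bound $\frac{r}{(1+r)^2}\le\frac14$ is equivalent to $4r\le(1+r)^2$, which is $(1-r)^2\ge0$ (AM--GM).

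For \eqref{eq:energy-cost}, one factor of $1+3r$ cancels:
\[
\frac{p(r)^2}{2D(r)}=\frac{4r^2}{(1+3r)^2}\cdot\frac{1+3r}{2(1+r)^2}=\frac{2r^2}{(1+r)^2(1+3r)}.
\]
Comparing with $2\delta(r)=2r^2/(1+r)^2$, the inequality reduces to $\frac{1}{1+3r}\le1$. This holds since $r\ge0$, with equality only at $r=0$, where both sides vanish anyway.

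None of these steps is a real obstacle. The only place an error is likely is the sign and cancellation in simplifying $1/(2(1+r))+1/(2D(r))$. I would double-check it by evaluating at $r=1$: there $D(1)=1$, so $\delta(1)=1-\tfrac14-\tfrac12=\tfrac14$, which matches $r^2/(1+r)^2=\tfrac14$.
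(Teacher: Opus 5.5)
Your proposal is correct and takes the same route as the paper: substitute $p(r)=2r/(1+3r)$ and $D(r)=(1+r)^2/(1+3r)$, and reduce $r/(1+r)^2\le\frac14$ to $(1-r)^2\ge0$. Your simplifications all check out, including $\frac{1}{2(1+r)}+\frac{1}{2D(r)}=\frac{1+2r}{(1+r)^2}$ and the reduction of the energy-cost inequality to $1/(1+3r)\le1$.
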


\begin{proof}
All identities follow by direct substitution.  For
\eqref{eq:marked-cost}, the inequality $r/(1+r)^2\le1/4$ is equivalent to
$(1-r)^2\ge0$.
\end{proof}

\begin{proposition}[Supermartingale and one-step costs]\label{prop:supermartingale}
The process $(L_i)_{i=0}^n$ is a positive supermartingale with respect to
the filtration $(\Ff_i)_{i=0}^n$.  More precisely,
\begin{equation}\label{eq:drift}
   L_{i-1}-\E(L_i\mid\Ff_{i-1})
      =\delta(r_i)L_{i-1}.
\end{equation}
Moreover,
\begin{align}
   \E(L_ip_i\mid\Ff_{i-1})
      &\le\frac14L_{i-1},\label{eq:one-mark}\\
   \E(L_ip_i^2\mid\Ff_{i-1})
      &\le2\bigl(L_{i-1}-\E(L_i\mid\Ff_{i-1})\bigr).
      \label{eq:one-energy}
\end{align}
\end{proposition}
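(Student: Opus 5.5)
The plan is to condition on $\Ff_{i-1}$ and compute everything from the recursion \eqref{eq:L-recursion}, using only that $r_i$ and $L_{i-1}$ are $\Ff_{i-1}$-measurable while $X_i$ is a fair bit independent of $\Ff_{i-1}$. Because $X_i$ takes the values $0$ and $1$ with probability $1/2$ each, every conditional expectation becomes an explicit average of two terms. In each term, $L_{i-1}$ is multiplied by a function of $r_i$, and Lemma~\ref{lem:local} then finishes the argument.

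For \eqref{eq:drift}, take the two branches of \eqref{eq:L-recursion}:
\[
\E(L_i\mid\Ff_{i-1})=\frac12\Bigl(\frac{1}{1+r_i}+\frac{1}{D(r_i)}\Bigr)L_{i-1}.
\]
It follows that
\[
L_{i-1}-\E(L_i\mid\Ff_{i-1})=\Bigl(1-\frac{1}{2(1+r_i)}-\frac{1}{2D(r_i)}\Bigr)L_{i-1}=\delta(r_i)L_{i-1}.
\]
Positivity of $L_i$ comes from Lemma~\ref{lem:W-recursion}, since $W_i>0$. Integrability is immediate because $L_i$ takes finitely many values. By Lemma~\ref{lem:local}, $\delta(r_i)\ge 0$, so $(L_i)$ is a supermartingale.

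For \eqref{eq:one-mark} and \eqref{eq:one-energy}, note that $p_i=0$ on $\{X_i=0\}$ and $p_i=p(r_i)$ on $\{X_i=1\}$. On the second event $L_i=L_{i-1}/D(r_i)$, so only that branch contributes:
\[
\E(L_ip_i\mid\Ff_{i-1})=\frac{p(r_i)}{2D(r_i)}L_{i-1}\le\frac14L_{i-1}
\]
by \eqref{eq:marked-cost}. In the same way,
\[
\E(L_ip_i^2\mid\Ff_{i-1})=\frac{p(r_i)^2}{2D(r_i)}L_{i-1}\le2\delta(r_i)L_{i-1}
\]
by \eqref{eq:energy-cost}. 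By \eqref{eq:drift}, the right-hand side equals $2\bigl(L_{i-1}-\E(L_i\mid\Ff_{i-1})\bigr)$.

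There is no serious obstacle, since everything reduces to the local identities of Lemma~\ref{lem:local}. The one point needing care is that $p_i$ is not $\Ff_{i-1}$-measurable: it depends on $X_i$. This is handled by splitting on the value of $X_i$ before pulling out the $\Ff_{i-1}$-measurable factors $r_i$ and $L_{i-1}$.
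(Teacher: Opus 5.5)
Your proposal is correct and follows the paper's proof essentially verbatim: you condition on $\Ff_{i-1}$, average the two branches of \eqref{eq:L-recursion} over the fair bit $X_i$, note that only the $X_i=1$ branch contributes to the $p_i$ and $p_i^2$ terms, and conclude with the identities of Lemma~\ref{lem:local}. Your added remarks on positivity via Lemma~\ref{lem:W-recursion}, integrability, and $\delta\ge 0$ just spell out details the paper leaves implicit.
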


\begin{proof}
Conditionally on $\Ff_{i-1}$, the bit $X_i$ is fair and $r_i$ is fixed.  Hence
\eqref{eq:L-recursion} gives
\begin{align*}
   \E(L_i\mid\Ff_{i-1}) &= \frac{1}{2}\left(\E(L_i\mid\Ff_{i-1}, X_i = 0) + \E(L_i\mid\Ff_{i-1}, X_i=1) \right) \\
   &=\left(\frac{1}{2(1+r_i)}+\frac{1}{2D(r_i)}\right)L_{i-1}
   =\bigl(1-\delta(r_i)\bigr)L_{i-1}.
\end{align*}
This proves \eqref{eq:drift} and the supermartingale property.

Since $p_i=0$ on $\{X_i=0\}$ and $p_i=p(r_i)$ on $\{X_i=1\}$,
\[
   \E(L_ip_i\mid\Ff_{i-1})
   =\frac{p(r_i)}{2D(r_i)}L_{i-1}
   \le\frac14L_{i-1}
\]
by \eqref{eq:marked-cost}.  Similarly, by \eqref{eq:energy-cost}
\[
   \E(L_ip_i^2\mid\Ff_{i-1})
   =\frac{p(r_i)^2}{2D(r_i)}L_{i-1}
   \le2\delta(r_i)L_{i-1},
\]
and \eqref{eq:one-energy} follows from \eqref{eq:drift}.
\end{proof}

Summing the quadratic estimate gives the following bound, used for the weighted $4$-AP sum.

\begin{corollary}\label{cor:tail-energy}
For every $0\le t<n$,
\begin{equation*}
   \E\left(\sum_{j=t+1}^nL_jp_j^2\,\middle|\,\Ff_t\right)
   \le2\bigl(L_t-\E(L_n\mid\Ff_t)\bigr)
   \le2L_t.
\end{equation*}
In particular,
\begin{equation}\label{eq:total-energy}
   \sum_{j=1}^n\E(L_jp_j^2)\le2.
\end{equation}
\end{corollary}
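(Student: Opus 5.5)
The plan is to sum the one-step estimate \eqref{eq:one-energy} of Proposition~\ref{prop:supermartingale} over $j=t+1,\dots,n$ and telescope with the tower property. For each $j>t$, I condition first on $\Ff_{j-1}$. Since $\Ff_t\subseteq\Ff_{j-1}$, \eqref{eq:one-energy} gives
\[
   \E\bigl(L_jp_j^2\mid\Ff_t\bigr)
   =\E\bigl(\E(L_jp_j^2\mid\Ff_{j-1})\mid\Ff_t\bigr)
   \le2\,\E\bigl(L_{j-1}-\E(L_j\mid\Ff_{j-1})\mid\Ff_t\bigr)
   =2\bigl(\E(L_{j-1}\mid\Ff_t)-\E(L_j\mid\Ff_t)\bigr).
\]
Here I use that $L_{j-1}$ is $\Ff_{j-1}$-measurable and that conditional expectation is monotone. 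Integrability is not an issue, since $\Omega$ is finite (the space of $X$ together with the finitely many auxiliary coin flips).

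Summing over $j=t+1,\dots,n$ makes the right-hand side telescope to $2\bigl(\E(L_t\mid\Ff_t)-\E(L_n\mid\Ff_t)\bigr)=2\bigl(L_t-\E(L_n\mid\Ff_t)\bigr)$, because $L_t$ is $\Ff_t$-measurable. This is the first inequality.

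The second inequality follows from $L_n>0$, which holds because $W_n>0$ by Lemma~\ref{lem:W-recursion}. Hence $\E(L_n\mid\Ff_t)\ge0$.

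For \eqref{eq:total-energy}, I take $t=0$. Then $\Ff_0$ is trivial and $L_0=1$ by \eqref{eq:L-endpoints}, so taking expectations gives $\sum_{j=1}^n\E(L_jp_j^2)\le2$.

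No step is a real obstacle. The only point needing care is the measurability bookkeeping: $p_j$ depends on $X_j$ and $r_j$, but \eqref{eq:one-energy} already handles the conditioning on $\Ff_{j-1}$, so the argument is purely a telescoping of supermartingale drifts.
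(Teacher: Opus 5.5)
Your proposal is correct and matches the paper's proof essentially step for step. Both apply the tower property to \eqref{eq:one-energy} given $\Ff_t$, telescope the drifts over $j=t+1,\dots,n$, use $L_n>0$ for the second inequality, and specialize to $t=0$ with $L_0=1$.
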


\begin{proof}
Taking conditional expectations in \eqref{eq:one-energy} given $\Ff_t$ and summing gives
\begin{align*}
\E\left(\sum_{j=t+1}^nL_jp_j^2\,\middle|\,\Ff_t\right)
&\le2\sum_{j=t+1}^n
\E\left(L_{j-1}-\E(L_j\mid\Ff_{j-1})\,\middle|\,\Ff_t\right)\\
&=2\sum_{j=t+1}^n
\left(\E(L_{j-1}\mid\Ff_t)-\E(L_j\mid\Ff_t)\right)\\
&=2\bigl(L_t-\E(L_n\mid\Ff_t)\bigr).
\end{align*}
Positivity of $L_n$ gives the second inequality.  Taking $t=0$ and using
$L_0=1$ proves \eqref{eq:total-energy}.
\end{proof}

\section{Completion of the proof}
\label{sec:final}

By Lemma~\ref{lem:W-recursion} and the definition of $W(X)$, $W_n=W(X)>0$;
by \eqref{eq:L-endpoints}, $L_n=\mu_n(\C)/W(X)$.
Lemma~\ref{lem:master} therefore gives
\begin{equation}\label{eq:master}
   \mu_n(\A)\mu_{n}(\mathcal{C})
   \le
   \E\left[
      L_n\E\bigl(I(Z\text{ contains a $k$-AP})\mid\Ff_n\bigr)
   \right].
\end{equation}
Since probabilities $p_i$ are fixed given $X$, for $S\subseteq[n]$, we define
\[
   P_S:=\E\bigl(I(S\subseteq Z)\mid\Ff_n\bigr)
   =\prod_{i\in S}p_i,
\]
where $P_\varnothing=1$.  Note that $P_S$ is $\Ff_{\max S}$-measurable when $S\ne\varnothing$.
Then, by the union bound,
\begin{equation}\label{eq:union}
   \E\left[
      L_n\E\bigl(I(Z\text{ contains a $k$-AP})\mid\Ff_n\bigr)
   \right]
   \le
   \sum_{Q\in\Pk_k(n)}\E(L_nP_Q).
\end{equation}
We bound the last sum by the following claim.

\begin{claim}
Consider a progression $Q(a, d) = \{a, a + d, \ldots, a + (k - 1)d \} \in \Pk_k(n)$ of length $k \geq 4$.  In the above setup, we have
\begin{align}
\label{eq: reduction to length-4 progressions}
   \E (L_n P_Q) \le 4^{- (k - 4)} \E (L_{a + 3d} \prod_{t = 0}^3 p_{a + td}).
\end{align}
\end{claim}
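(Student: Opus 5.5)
We prove the claim by peeling off the last $k-4$ terms of the progression one at a time, from the largest index downward. Each step uses the supermartingale property and the one-step bound \eqref{eq:one-mark} from Proposition~\ref{prop:supermartingale}.

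Write $q_s:=a+sd$ for $0\le s\le k-1$, so $q_0<q_1<\dots<q_{k-1}$, and set $\Pi_s:=\prod_{t=0}^{s}p_{q_t}$. For $s\ge 3$ the aim is the inequality
\[
   \E\bigl(L_{q_s}\Pi_s\bigr)\le\frac14\,\E\bigl(L_{q_{s-1}}\Pi_{s-1}\bigr)\qquad(s\ge4),
\]
together with the initial step $\E(L_nP_Q)=\E(L_n\Pi_{k-1})\le\E(L_{q_{k-1}}\Pi_{k-1})$. Iterating the recursive inequality from $s=k-1$ down to $s=4$ gives the factor $4^{-(k-4)}$ and the right-hand side $\E(L_{a+3d}\prod_{t=0}^3p_{a+td})$, which is exactly the claim.

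\textbf{Initial step.} $\Pi_{k-1}$ is $\Ff_{q_{k-1}}$-measurable, since each $p_j$ is $\Ff_j$-measurable: $r_j$ is $\Ff_{j-1}$-measurable and $X_j$ is $\Ff_j$-measurable. It is also nonnegative. Conditioning on $\Ff_{q_{k-1}}$ and applying the supermartingale property $\E(L_n\mid\Ff_{q_{k-1}})\le L_{q_{k-1}}$ from Proposition~\ref{prop:supermartingale} (iterated) gives the inequality.

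\textbf{Recursive step.} Fix $s\ge4$ and condition on $\Ff_{q_s-1}$. The product $\Pi_{s-1}$ involves only indices $\le q_{s-1}\le q_s-1$, so it is $\Ff_{q_s-1}$-measurable. Hence, by \eqref{eq:one-mark},
\[
   \E\bigl(L_{q_s}\Pi_s\mid\Ff_{q_s-1}\bigr)=\Pi_{s-1}\,\E\bigl(L_{q_s}p_{q_s}\mid\Ff_{q_s-1}\bigr)\le\frac14\,\Pi_{s-1}L_{q_s-1}.
\]
Next, since $\Pi_{s-1}$ is nonnegative and $\Ff_{q_{s-1}}$-measurable, and $q_s-1\ge q_{s-1}$, the supermartingale property gives
\[
   \E\bigl(\Pi_{s-1}L_{q_s-1}\bigr)\le\E\bigl(\Pi_{s-1}L_{q_{s-1}}\bigr).
\]
Taking total expectations and combining the two displays yields the recursive inequality.

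The only delicate point is measurability. We need $p_{q_s}$ to be the sole factor depending on $X_{q_s}$, so that \eqref{eq:one-mark} applies with everything else frozen. This holds because the progression indices are strictly increasing and each $p_j$ depends only on $X_1,\dots,X_j$. The bookkeeping is otherwise routine; no energy estimate from Corollary~\ref{cor:tail-energy} is needed here, as it is presumably reserved for bounding the remaining $4$-AP sum.
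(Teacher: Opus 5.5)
Your proof is correct and follows the paper's argument. You first pass from $L_n$ to $L_{q_{k-1}}$ by the supermartingale property; then, for each $s\ge4$, you condition on $\Ff_{q_s-1}$, apply \eqref{eq:one-mark} to gain the factor $1/4$, and use supermartingality again to replace $L_{q_s-1}$ by $L_{q_{s-1}}$. The paper packages the same downward induction as a ``least $s$'' contradiction, while you iterate directly, which is only a cosmetic difference.
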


\begin{proof}
Since $P_{Q}$ is $\Ff_{a + (k - 1)d}$-measurable, the supermartingality of $(L_{i})_{i = 0}^{n}$ implies
\begin{align*}
   \E(L_n P_Q) = \E (P_Q \E(L_n \, | \, \Ff_{a + (k - 1)d})) \le \E (P_Q L_{a + (k - 1)d} ) = \E (L_{a + (k - 1)d} \prod_{t = 0}^{k - 1} p_{a + td}).
\end{align*}
Now, let $s$ be the least integer in $\{4, \ldots, k\}$ for which we have
\begin{align}
\label{eq: s-hypothesis}
   \E \left (L_{a + (k - 1)d} \prod_{t = 0}^{k - 1} p_{a + td} \right ) \le 4^{- (k - s)} \E \left (L_{a + (s - 1) d} \prod_{t = 0}^{s - 1} p_{a + t d} \right ).
\end{align}
If $s > 4$, then, conditioning on $\Ff_{a + (s - 1)d - 1}$ in the expectation on the right-hand side of \eqref{eq: s-hypothesis} implies
\begin{align*}
   \E \left (L_{a + (s - 1) d} \prod_{t = 0}^{s - 1} p_{a + t d} \right ) &  =  \E \; \left [ \E \left (L_{a + (s - 1) d} \prod_{t = 0}^{s - 1} p_{a + t d} \; | \; \Ff_{a + (s - 1)d - 1} \right ) \right ]  \\
   & = \E \; \left [ \prod_{t = 0}^{s - 2} p_{a + t d} \; \cdot \; \E \left (L_{a + (s - 1) d} \cdot  p_{a + (s - 1) d}  \; | \; \Ff_{a + (s - 1)d - 1} \right ) \right ] \\
   & \overset{\eqref{eq:one-mark}}{\le}  \frac{1}{4} \E \; \left [ \prod_{t = 0}^{s - 2} p_{a + t d} \; \cdot \; L_{a + (s - 1) d - 1} \right ]  \\
   & \le \frac{1}{4} \E \; \left [ L_{a + (s - 2) d}  \prod_{t = 0}^{s - 2} p_{a + t d}  \right ],
\end{align*}
where we used the supermartingality of $(L_i)_{i = 0}^n$ on the last line again. Substituting this bound into~\eqref{eq: s-hypothesis} implies that~\eqref{eq: s-hypothesis} holds for $s - 1$ as well, a contradiction. Therefore, we have $s = 4$, which is precisely~\eqref{eq: reduction to length-4 progressions}.
\end{proof}

As any progression of length $k \geq 4$ is uniquely determined by its first four terms, the claim together with~\eqref{eq:union}  implies
\begin{equation}\label{eq:k-to-four}
   \E\left[
      L_n\E\bigl(I(Z\text{ contains a $k$-AP})\mid\Ff_n\bigr)
   \right] 
   \le4^{-(k-4)}
   \sum_{R\in\Pk_4(n)}\E(L_{\max R}P_R).
\end{equation}
The last sum is bounded by the following lemma.

\begin{lemma}\label{lem:four}
One has
\begin{equation}\label{eq:four-bound}
   \sum_{R\in\Pk_4(n)}\E(L_{\max R}P_R)\le4.
\end{equation}
\end{lemma}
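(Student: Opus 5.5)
The plan is to split the product of the four marks $P_R$ into two "energy" products, each involving only two coordinates that determine the progression. The energy bound of Corollary~\ref{cor:tail-energy} is then used twice: once conditionally, to sum over the later coordinate, and once unconditionally via \eqref{eq:total-energy}, to sum over the earlier one.

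First, for $R=\{a,a+d,a+2d,a+3d\}$ apply AM--GM in the form
\[
   P_R=p_a p_{a+d} p_{a+2d} p_{a+3d}\le\tfrac12\bigl(p_a^2p_{a+3d}^2+p_{a+d}^2p_{a+2d}^2\bigr).
\]
This bounds $\sum_R\E(L_{\max R}P_R)$ by $\tfrac12(S_1+S_2)$, where
\[
   S_1=\sum_R\E\bigl(L_{a+3d}\,p_a^2p_{a+3d}^2\bigr),\qquad
   S_2=\sum_R\E\bigl(L_{a+3d}\,p_{a+d}^2p_{a+2d}^2\bigr).
\]
It then suffices to prove $S_1\le4$ and $S_2\le4$.

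For $S_2$, note that $p_{a+d}p_{a+2d}$ is $\Ff_{a+2d}$-measurable. By the supermartingale property of Proposition~\ref{prop:supermartingale}, conditioning on $\Ff_{a+2d}$ lets us replace $L_{a+3d}$ by $L_{a+2d}$. The map $R\mapsto(x,y)=(a+d,a+2d)$ is injective into pairs $x<y$, and all terms are nonnegative, so
\[
   S_2\le\sum_{x}\E\Bigl(p_x^2\sum_{y>x}L_yp_y^2\Bigr)
   =\sum_x\E\Bigl(p_x^2\,\E\Bigl(\sum_{y>x}L_yp_y^2\,\Big|\,\Ff_x\Bigr)\Bigr),
\]
where we used that $p_x$ is $\Ff_x$-measurable. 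Corollary~\ref{cor:tail-energy} with $t=x$ bounds the inner conditional expectation by $2L_x$. Hence $S_2\le2\sum_x\E(L_xp_x^2)\le4$ by \eqref{eq:total-energy}. The argument for $S_1$ is identical, using the injective map $R\mapsto(a,a+3d)$ and no supermartingale step, since $L_{a+3d}$ already sits at the later index. Thus $S_1\le4$ as well, and the total is at most $\tfrac12(4+4)=4$.

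The main point requiring care is the choice of splitting: a naive bound such as $p_ap_{a+d}\le1$ leaves a sum over all pairs $(x,y)$ without squares, which is not controlled. The AM--GM pairing instead places squared marks on two coordinates that determine $R$. With that pairing, the nested use of Corollary~\ref{cor:tail-energy} (conditional at time $x$, then the total energy bound) goes through directly. The remaining checks are the measurability facts above and the nonnegativity needed to enlarge the sums over $y$.
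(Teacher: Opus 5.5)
Your proof is correct and follows essentially the same route as the paper: AM--GM into two squared pairs, the supermartingale step for the pair that does not contain $\max R$, injectivity of $R$ into pairs $x<y$, and Corollary~\ref{cor:tail-energy} applied conditionally and then via \eqref{eq:total-energy}. The only difference is that you pair the outer terms $(a,a+3d)$ with the inner terms $(a+d,a+2d)$, whereas the paper pairs $(a_1,a_2)$ with $(a_3,a_4)$; since each of these pairs determines the progression, both give the same constant $4$.
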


\begin{proof}
Write $R=\{a_1<a_2<a_3<a_4\}$.  We have
\[
   p_{a_1}p_{a_2}p_{a_3}p_{a_4}
   \le\frac12\left(p_{a_1}^2p_{a_2}^2+p_{a_3}^2p_{a_4}^2\right).
\]
For the first term, $p_{a_1}^2p_{a_2}^2$ is $\Ff_{a_2}$-measurable, so the
supermartingale property gives
\[
   \E(L_{a_4}p_{a_1}^2p_{a_2}^2)
   =\E\left[p_{a_1}^2p_{a_2}^2\E(L_{a_4}\mid\Ff_{a_2})\right]
   \le\E(L_{a_2}p_{a_1}^2p_{a_2}^2).
\]
Hence
\begin{align*}
\sum_{R\in\Pk_4(n)}\E(L_{\max R}P_R)
&\le\frac12\sum_{R\in\Pk_4(n)}
   \E(L_{a_2}p_{a_1}^2p_{a_2}^2)
 +\frac12\sum_{R\in\Pk_4(n)}
   \E(L_{a_4}p_{a_3}^2p_{a_4}^2)\\
&\le\sum_{1\le i<j\le n}\E(L_jp_i^2p_j^2).
\end{align*}
The last inequality uses the elementary fact that a $4$-AP is uniquely determined
by its first two terms and also by its last two terms; thus any ordered pair
$i<j$ occurs at most once in each of the two sums.

Now apply Corollary \ref{cor:tail-energy} twice.  Since $p_i^2$ is $\Ff_i$-measurable,
\begin{align*}
\sum_{1\le i<j\le n}\E(L_jp_i^2p_j^2)
&=\sum_{i=1}^{n-1}
  \E\left[p_i^2\,
  \E\left(\sum_{j=i+1}^nL_jp_j^2\,\middle|\,\Ff_i\right)\right]\\
&\le2\sum_{i=1}^{n-1}\E(L_ip_i^2) \le4,
\end{align*}
where the last inequality is \eqref{eq:total-energy}.  This proves
\eqref{eq:four-bound}.
\end{proof}

Now, we are ready to complete the proofs of Theorems~\ref{thm:reduced} and~\ref{thm:main}.

\begin{proof}[Proof of Theorem~\ref{thm:reduced} and Theorem~\ref{thm:main}]
Combining \eqref{eq:master}, \eqref{eq:union}, \eqref{eq:k-to-four}, and
\eqref{eq:four-bound}, we have
\begin{align*}
   \mu_n(\A)\mu_n(\C)
   \le4^{-(k-4)}\cdot4
   =1024\cdot 4^{-k}. & \qedhere
\end{align*}
\end{proof}

\section{Comparison with Gillott's proof} \label{sec:comparation}

Our approach builds on Gillott's framework~\cite{Gillott2026}. We highlight the differences that give exponential decay with the correct base in place of the polynomial bound.

First, our deletion probabilities depend on the relative increase in the conditional density of $\C$, whereas Gillott uses the absolute increase. This modification already suffices for an exponential bound.

Second, in order to bound
$$\E\left[ L_n(X) \E\bigl(I(Z\text{ contains a $k$-AP})\mid X\bigr) \right] = \E\left[ \frac{\mu_{n}(\C)}{W_n(X)} \E\bigl(I(Z\text{ contains a $k$-AP})\mid X\bigr) \right],$$
Gillott estimates the surviving density and the probability that $Z$ contains a
$k$-AP separately. He shows that the latter probability is small for independent
sampling with a sufficiently small $\ell_2$-norm of the probability vector
\cite[Theorem~2.1]{Gillott2026}, and that for most $X$ one can choose such a
deletion set with $W_n(X)$ close to $\mu_n(\C)$ from below
\cite[Theorem~1.1]{Gillott2026}. We instead estimate the weighted probability
$L_n(X)\E\bigl(I(Z\text{ contains a $k$-AP})\mid X\bigr)$ directly, using
\eqref{eq:one-mark} and \eqref{eq:one-energy} to retain the dependence between
$L_i$ and $p_i$.

We also remark that the pairing argument in Lemma~\ref{lem:four} can be viewed as an
elementary discrete Brascamp--Lieb-type inequality.

\section{Declaration of generative AI use}\label{sec:AI}

We first asked ChatGPT 5.6 to improve the $k$-AP result of \cite{Gillott2026}. It initially obtained the exponential bound $m_k \le \exp(-\Omega(k))$. After further discussion, during which we provided additional ideas, ChatGPT improved this bound to $m_k \le 2^{-k+o(k)}$. Subsequent conversations led to the stronger estimate $\widetilde{m}_k \le O(4^{-k})$. ChatGPT generated the initial draft of the manuscript. The authors verified the arguments, revised and edited the manuscript, and take full responsibility for the final content.

\end{document}